\font\smallit=cmti10
\renewcommand\section{\@startsection {section}{1}{\z@}
{-30pt \@plus -1ex \@minus -.2ex}
{2.3ex \@plus.2ex}
{\normalfont\normalsize\bfseries\boldmath}}
\renewcommand\subsection{\@startsection{subsection}{2}{\z@}
{-3.25ex\@plus -1ex \@minus -.2ex}
{1.5ex \@plus .2ex}
{\normalfont\normalsize\bfseries\boldmath}}
\renewcommand{\@seccntformat}[1]{\csname the#1\endcsname. }
\newtheorem{theorem}{Theorem}
\newtheorem{lemma}{Lemma}
\newtheorem{proposition}{Proposition}
\theoremstyle{definition}
\renewcommand{\mod}[1]{\ (\text{mod}\ #1)}
\begin{document}

\begin{center}
\uppercase{\bf On Upper Bounds for the Count of Elite Primes}
\vskip 20pt
{\bf Matthew Just}\\
{\smallit Department of Mathematics, University of Georgia, Athens, Georgia 30605, United States}\\
{\tt justmatt@uga.edu}\\ 
\end{center}

\centerline{\bf Abstract}
\noindent
We look at upper bounds for the count of certain primes related to the Fermat numbers $F_n=2^{2^n}+1$ called elite primes. We first note an oversight in a result of K{\v{r}}{\'\i}{\v{z}}ek, Luca and Somer and give the corrected, slightly weaker upper bound. We then assume the Generalized Riemann Hypothesis for Dirichlet $L$ functions and obtain a stronger conditional upper bound.

\section{Introduction}

The Fermat numbers are given by $F_n=2^{2^n}+1$ for $n\geq 0$. The first few Fermat numbers are 3, 5, 17, 257, 65537, 4294967297. Notice that the first five Fermat numbers are prime, and it was initially conjectured (by Fermat) that all such numbers are prime. The sixth Fermat number is not prime, and no other Fermat primes are known. It is known that $F_n$ is composite for $5\leq n \leq 32$ though interestingly no prime factor of $F_{14}$, $F_{20}$, $F_{22}$, or $F_{24}$ is  known (see \cite{crandall2003twenty}).
An efficient test exists to determine whether or not a Fermat number is prime, called P\'epin's test. 
\begin{proposition}[P\'epin's test \cite{ribenboim2012new}]
Let $n>0$. Then $F_n$ is prime if and only if \[3^{(F_n-1)/2} \equiv -1 \emph{$\mod{F_n}$}.\] 
\end{proposition}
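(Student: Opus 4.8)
The plan is to prove the two implications separately, using a converse-to-Fermat multiplicative-order argument for sufficiency and Euler's criterion together with quadratic reciprocity for necessity. Throughout I would first record two elementary congruences that drive both halves. Since $n \ge 1$, the exponent $2^n$ is even, so $2^{2^n} \equiv (-1)^{2^n} \equiv 1 \pmod 3$ and hence $F_n \equiv 2 \pmod 3$; and since $2^n \ge 2$, we have $4 \mid 2^{2^n}$, so $F_n \equiv 1 \pmod 4$. The first of these also shows $\gcd(3, F_n) = 1$, which I need before invoking any statement about the multiplicative order of $3$.

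For the sufficiency direction ($\Leftarrow$), suppose $3^{(F_n-1)/2} \equiv -1 \pmod{F_n}$. Squaring gives $3^{F_n - 1} \equiv 1 \pmod{F_n}$, so the order $d$ of $3$ modulo $F_n$ divides $F_n - 1 = 2^{2^n}$. On the other hand the hypothesis says $3^{(F_n-1)/2} \not\equiv 1$, so $d \nmid (F_n-1)/2 = 2^{2^n - 1}$. The crucial structural point is that $F_n - 1$ is a pure power of $2$: its only divisors are powers of $2$, and a power of $2$ that divides $2^{2^n}$ but not $2^{2^n-1}$ must equal $2^{2^n}$. Hence $d = F_n - 1$. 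Finally, $d$ divides $\phi(F_n)$, and $\phi(F_n) \le F_n - 1$ with equality exactly when $F_n$ is prime; since $F_n - 1 = d \le \phi(F_n) \le F_n - 1$, we get $\phi(F_n) = F_n - 1$ and conclude that $F_n$ is prime.

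For the necessity direction ($\Rightarrow$), assume $F_n = p$ is prime. Because $\gcd(3,p)=1$, Euler's criterion gives $3^{(p-1)/2} \equiv \left(\frac{3}{p}\right) \pmod{p}$, so it suffices to show that the Legendre symbol $\left(\frac{3}{p}\right)$ equals $-1$, i.e., that $3$ is a quadratic non-residue modulo $F_n$. Using $F_n \equiv 1 \pmod 4$, quadratic reciprocity removes the sign factor and yields $\left(\frac{3}{p}\right) = \left(\frac{p}{3}\right)$. Reducing modulo $3$ via $F_n \equiv 2 \pmod 3$ then gives $\left(\frac{p}{3}\right) = \left(\frac{2}{3}\right)$, and since the nonzero squares mod $3$ are just $\{1\}$, we have $\left(\frac{2}{3}\right) = -1$. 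Chaining these equalities produces $\left(\frac{3}{p}\right) = -1$, which is exactly the desired congruence.

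The routine ingredients—Euler's criterion, the reciprocity law, and the order-divisibility facts—are all standard, so I do not expect a genuine obstacle. If anything, the step that most deserves care is the order computation in the sufficiency half: it is essential that $F_n - 1$ be a prime power (here a power of $2$), so that ``divides $2^{2^n}$ but not $2^{2^n-1}$'' forces $d$ to take its maximal value $F_n - 1$. For a general modulus $N$ one could only conclude $d \mid N-1$ with $d \nmid (N-1)/2$, which need not pin down $d$. This is precisely where the special form of the Fermat numbers is used, and it is what makes the test an ``if and only if'' rather than a one-directional primality certificate.
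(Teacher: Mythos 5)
Your proof is correct and follows essentially the same route as the paper's own argument: Euler's criterion plus quadratic reciprocity (using $F_n \equiv 1 \pmod 4$ and $F_n \equiv 2 \pmod 3$) for necessity, and the multiplicative-order argument forcing the order of $3$ to equal $F_n - 1 = 2^{2^n}$ for sufficiency. Your added care in noting $\gcd(3,F_n)=1$ and phrasing the conclusion via $\phi(F_n) = F_n - 1$ is a slight polish on the same idea, not a different approach.
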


The only fact about the prime 3 used in this proof is that $F_n$ is a quadratic nonresidue$\mod{3}$ for $n>0$. Thus we can replace 3 with any other prime that satisfies this requirement. 
Primes $p$ for which all sufficiently large Fermat numbers are quadratic nonresidues$\mod{p}$ are called \textit{elite primes}, a term introduced by Aigner \cite{aigner1986primzahlen}. Thus we can use elite primes to test the primality of $F_n$ for all but finitely many Fermat numbers. For more on the search for elite primes see \cite{allep}, \cite{continuingsearch}, and \cite{complex}. 

Similarly defined are anti-elite primes which are primes $p$ such that $F_n$ is a quadratic residue for all but finitely many $n$. Generalizations of elite and anti-elite primes for numbers other than the Fermat numbers were studied by M{\"u}ller, see \cite{muller2007anti}, \cite{muller2008generalization}, and \cite{muller2008generalized}.

Let $E(x)$ be the number of elite primes up to $x$. K{\v{r}}{\'\i}{\v{z}}ek, Luca and Somer \cite{kvrivzek2002convergence} gave the upper bound \[E(x) = O\left( \frac{x}{(\log x)^2} \right)\] as $x\rightarrow\infty$.  Their proof used the estimate  \[\prod_{i=0}^{2t}F_i< 2^{2^{t+1}};\] however one can check that for all $t\geq 0$ \[\prod_{i=0}^{2t}F_i = 2^{2^{2t+1}} -1.\] Using the revised estimate $\prod_{i=0}^{2t}F_{i} < 2^{2^{2t+1}}$ we obtain the following result following their proof: 
\begin{theorem}
    Let $E(x)$ be the number of elite primes up to $x$. Then \[E(x)=O\left(\frac{x}{(\log x)^{3/2}}\right)\] as $x\rightarrow \infty$.
\end{theorem}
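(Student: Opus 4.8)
The plan is to follow the argument of K{\v{r}}{\'\i}{\v{z}}ek, Luca and Somer verbatim, substituting the corrected product estimate and then tracking how the exponent of $\log x$ propagates. First I would record the dynamical picture of the Fermat numbers modulo an odd prime $p$. Writing $g_n \equiv 2^{2^n} \pmod{p}$, one has $g_{n+1}\equiv g_n^2$, so the sequence $(F_n \bmod p)=(g_n+1 \bmod p)$ is eventually periodic; its period length $L_p$ equals the multiplicative order of $2$ modulo the odd part of $\mathrm{ord}_p(2)$, and its preperiod is bounded by $v_2(\mathrm{ord}_p(2))$. The prime $p$ is elite exactly when each of the $L_p$ residues occurring in the periodic tail is a quadratic nonresidue, so the whole problem becomes one of counting primes whose Fermat orbit avoids the quadratic-residue coset.

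Next I would partition the elite primes $p\le x$ by a cutoff parameter $t$, according to the size of the relevant order of $2$. The primes whose order of $2$ is a power of two dividing $2^{2t+1}$ are precisely the prime divisors of
\[
2^{2^{2t+1}}-1=\prod_{i=0}^{2t}F_i ,
\]
and here is the only point where the arithmetic correction matters: by the revised estimate $\prod_{i=0}^{2t}F_i<2^{2^{2t+1}}$ this product has at most $\log_2\!\big(2^{2^{2t+1}}\big)=2^{2t+1}$ distinct prime factors, whereas the faulty bound $2^{2^{t+1}}$ would have produced only $2^{t+1}$. Thus the corrected estimate doubles the exponent in the bit-count of the first class, and it is this doubling that will ultimately cost a factor of $\sqrt{\log x}$ in the final answer.

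For the complementary class I would use the eliteness hypothesis to force the remaining primes into thin arithmetic progressions. One shows that an elite prime whose order of $2$ is \emph{not} captured by the first class must satisfy a congruence $p\equiv 1\pmod{m}$ with $m$ growing with $t$ (essentially because a large admissible modulus is needed for the squaring dynamics to keep the orbit inside the nonresidues), so these primes may be counted by Brun--Titchmarsh and the contributions summed over the tail $m>2^{ct}$, giving a bound of order $x/\big(2^{ct}\log x\big)$. The elite primes are then bounded by the sum of the two class counts, $E(x)\ll 2^{2t+1}+x/\big(2^{ct}\log x\big)$, and I would choose $t=t(x)$ to balance them.

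The main obstacle is exactly this balancing together with making the structural dichotomy precise: one must pin down the admissible modulus $m$ forced by eliteness in terms of $t$, verify the Brun--Titchmarsh range is legitimate, and then optimize. With the corrected bit-count $2^{2t+1}$ the optimal cutoff sits at a modulus of size $\sim\sqrt{\log x}$ rather than the $\sim\log x$ that the erroneous count $2^{t+1}$ permitted, which is precisely why the exponent drops from $2$ to $3/2$ and yields $E(x)=O\!\big(x/(\log x)^{3/2}\big)$. I expect the delicate step to be the unconditional control of primes in progressions in the second class, where only a crude (sieve-level) estimate is available; this is also the step that the Generalized Riemann Hypothesis would later sharpen, producing the stronger conditional bound announced in the abstract.
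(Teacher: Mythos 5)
Your first paragraph and the arithmetic of the correction are fine (the periodicity of $F_n \bmod p$, the identity $\prod_{i=0}^{2t}F_i = 2^{2^{2t+1}}-1$, and the bound $2^{2t+1}$ on the number of prime divisors of that product), but the core counting step of your proposal is not. The structural claim behind your second class --- that an elite prime whose order of $2$ does not divide $2^{2t+1}$ must satisfy $p\equiv 1\pmod{m}$ with $m>2^{ct}$ --- is false, not merely unproven: eliteness imposes no such congruence. For instance $p=7$ is elite with $\mathrm{ord}_7(2)=3$, and in general the order of $2$ can have odd part $3$, so the only congruence you can extract this way is $p\equiv 1 \pmod{3}$ (times a power of $2$), which is not a thin progression. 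What eliteness actually gives, and what the K\v{r}\'{\i}\v{z}ek--Luca--Somer argument (which Theorem 1 follows, with the corrected estimate) uses, is the following: after discarding primes $p\equiv 1\pmod{2^t}$ by Brun--Titchmarsh, the remaining elite primes have preperiod at most $t$, so $F_n$ is a quadratic nonresidue mod $p$ for every $n\ge t$; since $F_n\equiv 1\pmod{4}$, quadratic reciprocity converts each condition $\left(\frac{F_n}{p}\right)=-1$ for $t\le n\le 2t$ into membership of $p$ in at most $\phi(F_n)/2$ residue classes modulo $F_n$, hence membership in at most $\phi(M)/2^{t+1}$ classes modulo $M=\prod_{n=t}^{2t}F_n$, and Brun--Titchmarsh applied to each of these classes gives $\ll x/(2^t\log x)$ \emph{provided} $M\le \sqrt{x}$. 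Nothing resembling your ``sum over the tail $m>2^{ct}$'' occurs or could be made to work.

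The missing constraint $M\le\sqrt{x}$ is also where the corrected estimate actually enters, and its absence makes your final balancing incoherent: minimizing $2^{2t+1}+x/(2^{ct}\log x)$ freely over $t$ gives a bound that is a power of $x$ (with $c=1$, roughly $(x/\log x)^{2/3}$), which is absurd for this elementary argument and signals a suppressed constraint rather than yielding $x/(\log x)^{3/2}$. In the genuine proof the first-class count $2^{2t+1}\ll\log x$ is negligible at the optimal choice of $t$; the inequality $\prod_{i=0}^{2t}F_i<2^{2^{2t+1}}$ matters because it caps the modulus in the sieve step: requiring $M<2^{2^{2t+1}}\le\sqrt{x}$ forces $2^t\ll\sqrt{\log x}$, whereas the erroneous bound $2^{2^{t+1}}$ would have permitted $2^t\ll\log x$. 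Substituting this cap into $x/(2^t\log x)$ is exactly what produces $x/(\log x)^{3/2}$ in place of $x/(\log x)^2$. So you located the corrected inequality but attached it to the wrong term; without the reciprocity step and the modulus constraint, the proposal does not prove the theorem.
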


Corollary 2 of K{\v{r}}{\'\i}{\v{z}}ek, Luca and Somer's paper states that the sum of the reciprocals of the elite primes converges. We note that this still follows from Theorem 1.

We next give our main result which is a stronger upper bound for $E(x)$ conditional upon the Generalized Riemann Hypothesis for Dirichlet $L$ functions:

\begin{theorem}
    Let $E(x)$ be the number of elite primes up to $x$. Then assuming the Generalized Riemann Hypothesis \[E(x) = O(x^{5/6})\] as $x\rightarrow \infty$.
\end{theorem}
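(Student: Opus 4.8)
The plan is to make the elite condition explicit and local at $p$, then split the primes according to the length of their Fermat period and treat the resulting ranges by different tools. Fix an odd prime $p$ and write $\operatorname{ord}_p(2)=2^s m$ with $m$ odd. Since $2^{2^n}=(2^{2^s})^{2^{\,n-s}}$ for $n\ge s$, the residues $F_n\bmod p$ with $n\ge s$ are exactly the elements $g^{2^k}+1$, where $g:=2^{2^s}$ has order $m$ and $k$ runs over one period; the squaring orbit $\{g^{2^k}\}$ has size $L:=\operatorname{ord}_m(2)$ and lies in the group $\mu_m$ of $m$-th roots of unity. As noted after P\'epin's test, $p$ is elite precisely when $\big(\tfrac{g^{2^k}+1}{p}\big)=-1$ for every $k$, so an elite prime carries $L$ quadratic nonresidue conditions and satisfies the elementary constraints $m\mid p-1$, $m\mid 2^L-1$, and $2^s\mid p-1$.

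Next I would dispose of the primes whose order is small or whose $2$-part is large. If $\operatorname{ord}_p(2)\le z$ then $p$ divides $\prod_{d\le z}(2^d-1)<2^{\,z^2}$, so there are only $O(z^2)$ such primes; and by Brun--Titchmarsh the primes with $v_2(p-1)\ge s_0$ (hence all primes with $s\ge s_0$) number at most $O\!\big(x/(2^{s_0}\log x)\big)$. Taking $z=2^{s_0+L_0}$, these two bounds together account for \emph{all} elite primes with period $L\le L_0$ and $s\le s_0$, since such primes have $\operatorname{ord}_p(2)=2^s m<2^{s_0+L_0}=z$ (as $m\mid 2^L-1$ with $L\le L_0$). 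The substance is therefore the remaining range, where the period $L$, and hence the number of independent quadratic conditions, is large.

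For that range I would invoke the Generalized Riemann Hypothesis through the effective Chebotarev density theorem of Lagarias--Odlyzko. For a fixed admissible $m$, the conditions $\big(\tfrac{\zeta^{2^k}+1}{p}\big)=-1$ on a primitive $m$-th root of unity $\zeta$ cut out a union of Frobenius classes in the Kummer--cyclotomic field $K_m=\mathbb{Q}\big(\zeta_m,\ \sqrt{\zeta_m^{2^k}+1}:k\big)$, whose degree is $\asymp\phi(m)\,2^{L}$ and whose discriminant satisfies $\log|d_{K_m}|\ll\phi(m)2^{L}\log(2m)$. Under GRH the number of $p\le x$ with the prescribed Frobenius is $\tfrac{\#C}{[K_m:\mathbb{Q}]}\operatorname{Li}(x)+O\!\big(x^{1/2}(\log|d_{K_m}|+[K_m:\mathbb{Q}]\log x)\big)$, where the main term carries the saving $2^{-L}$ and the error is $x^{1/2}\cdot2^{O(L)}$. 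Because $2^{L}>m$, the main terms summed over the relevant $m$ converge and contribute only $O(x^{1-\lambda})$ once $2^{L_0}=x^{\lambda}$, so the entire problem reduces to keeping the accumulated Chebotarev error below $x^{5/6}$.

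The hard part is exactly this error control, and it is what pins the exponent. Since the degree and discriminant of $K_m$ grow like $2^{L}$, one cannot feed an unbounded period into Chebotarev: for $L$ much larger than a fixed multiple of $\log x$ the error $x^{1/2}2^{O(L)}$ overwhelms any gain. I expect the genuine difficulty to be twofold: first, truncating to the right finite sub-collection of quadratic conditions (keeping $2^{O(L)}\le x^{\delta}$ with $\delta$ small) while still separating $p$, and second, coupling the conditions to the prime $2$ — the orbit is the orbit of $g=2^{2^s}$ and $m$ is the \emph{full} odd part of $\operatorname{ord}_p(2)$, not merely some modulus — so that the main terms sum to a genuine power saving rather than to $O(x/\log x)$. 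Balancing the elementary count $O(z^2)=x^{2(\sigma+\lambda)}$ (with $2^{s_0}=x^{\sigma}$), the large-$2$-part count $x^{1-\sigma}$, and the truncated GRH error $x^{1/2+O(\lambda)}$ so that all three are simultaneously $\le x^{5/6}$ is what forces the thresholds and yields $E(x)=O(x^{5/6})$.
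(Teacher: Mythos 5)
Your elementary reductions are sound (the Brun--Titchmarsh step for primes with large $2$-part is exactly what the paper does), but the heart of the argument --- the GRH step for primes with small $2$-part and long period --- is not a proof: it is a sketch whose two ``genuine difficulties'' you name but do not resolve, and neither is resolvable along the route you propose. First, the Frobenius class of $p$ in $K_m=\mathbb{Q}\bigl(\zeta_m,\sqrt{\zeta_m^{2^k}+1}:k\bigr)$ records only $p$ modulo $m$ together with finitely many quadratic symbols; it does not, and cannot, record that $m$ is the odd part of $\operatorname{ord}_p(2)$, since that is a condition on the order of $2$ in $(\mathbb{Z}/p\mathbb{Z})^\times$, detectable only in Kummer extensions such as $\mathbb{Q}(\zeta_m,2^{1/m})$, whose degree is about $m\phi(m)$ and makes matters worse. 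Second, and more fundamentally, your strategy conditions on $m$, and $m$ runs over odd integers up to $x$; every Chebotarev application under GRH carries an error of size at least $x^{1/2}$ times the field degree, so already the degree-$\phi(m)$ cyclotomic layer forces the sum of errors over the relevant $m$ far beyond $x$. Truncating the quadratic conditions shrinks only the $2^L$ factor, not the need to sum over $m$, so no choice of your parameters $\sigma,\lambda,\delta$ closes this. (Note also that your route would need GRH for Dedekind or Artin $L$-functions, whereas the theorem assumes only GRH for Dirichlet $L$-functions.)

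The missing idea is that one should never condition on the period at all. By the eventual periodicity you yourself established, an elite prime $p$ with $v_2(p-1)\le t$ must have \emph{every} $F_n$ with $n\ge t$ a quadratic nonresidue; in particular the \emph{fixed} integers $F_t,F_{t+1},\ldots,F_{t+T}$ are nonresidues modulo $p$, uniformly in $p$ and with no reference to $m$ or $L$. One then bounds the count by $2^{-(T+1)}\sum_{p\le x}\prod_{i=0}^{T}\bigl(1-\bigl(\tfrac{F_{t+i}}{p}\bigr)\bigr)$ plus the (few) primes dividing $\prod_{i\le T}F_{t+i}$, expands the product, and observes that each $\bigl(\tfrac{\prod_{b\in B}F_{t+b}}{p}\bigr)$ is, as a function of $p$, a nonprincipal quadratic \emph{Dirichlet} character whose modulus is at most $2^{2^{t+T+1}}$ (nonprincipal because Fermat numbers are pairwise coprime, so such a product is never a square). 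Under GRH for Dirichlet $L$-functions the character sum is $O(\sqrt{x}\log(qx))$, so the enormous modulus costs only a logarithm, giving $O(\sqrt{x}(\log x+2^{t+T}))$ per character. Combining with the main term $\pi(x)/2^{T+1}$ and the Brun--Titchmarsh bound $O(x/2^t)$, and choosing $2^t=2^T=x^{1/6}$, yields $E(x)\ll x^{5/6}$. This is the paper's proof; your outline, by contrast, stalls exactly where you predicted it would.
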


\section{Proof of Theorem 2}

Before beginning the proof we introduce some tools we will use. Let $\left(\frac{a}{m}\right)$ denote the Kronecker symbol.

We will use the fact that for a fixed integer $a$ satisfying $a\neq 0$ and $a\not\equiv 3\mod{4}$ the Kronecker symbol is a Dirichlet character to the modulus $|a|$ (or $4|a|$ if $a\equiv2\mod{4}$). Furthermore this Dirichlet character is not the principal character as long as $a$ is not a square. The key lemma will be the following classical result which can be found in Montgomery and Vaughan \cite{montgomery2007multiplicative}.

\begin{lemma}
    Let $\chi$ be a Dirichlet character to the modulus $q$, not the principal character. Then assuming the Generalized Riemann Hypothesis \[\sum_{p\leq x} \chi(p) = O(\sqrt{x} \log(qx)).\]
\end{lemma}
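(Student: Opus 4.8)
The plan is to deduce the bound from the explicit formula for $L(s,\chi)$ together with the Generalized Riemann Hypothesis, which forces every nontrivial zero onto the line $\operatorname{Re}(s)=1/2$. First I would reduce to the case of a primitive character: if $\chi$ is induced by the primitive character $\chi^\ast$ modulo $q^\ast$ with $q^\ast\mid q$, then $\chi(p)=\chi^\ast(p)$ for $p\nmid q$, so $\sum_{p\le x}\chi(p)$ differs from $\sum_{p\le x}\chi^\ast(p)$ only by the terms with $p\mid q$, of which there are $O(\log q)$, each of size at most $1$; this is absorbed into the error term. Since $q^\ast\le q$, it suffices to prove the estimate for the primitive character $\chi^\ast$, for which the explicit formula takes its cleanest shape.

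Next I would pass from the unweighted prime sum to the Chebyshev-type weighted sum $\psi(x,\chi)=\sum_{n\le x}\Lambda(n)\chi(n)$, since the latter is what the explicit formula controls directly. Writing $\theta(x,\chi)=\sum_{p\le x}\chi(p)\log p$, the prime-power terms contribute $\psi(x,\chi)-\theta(x,\chi)=O(\sqrt x)$, and partial summation gives $\sum_{p\le x}\chi(p)=\theta(x,\chi)/\log x+\int_2^x\theta(t,\chi)\,t^{-1}(\log t)^{-2}\,dt$. The extra factor $1/\log t$ appearing in this inversion is exactly what converts a bound of size $\sqrt x(\log qx)^2$ for $\theta$ into the claimed bound $\sqrt x\log(qx)$ for the prime sum, so it remains only to establish the weighted estimate.

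The heart of the matter is the bound on $\psi(x,\chi)$. Here I would invoke the truncated explicit formula, which for a height parameter $2\le T\le x$ reads $\psi(x,\chi)=-\sum_{|\gamma|\le T}x^\rho/\rho+O\!\big(x(\log qx)^2/T\big)$, the sum running over the nontrivial zeros $\rho=\beta+i\gamma$ of $L(s,\chi)$. Under the Generalized Riemann Hypothesis every such zero has $\beta=1/2$, so $|x^\rho|=\sqrt x$ and the zero sum is at most $\sqrt x\sum_{|\gamma|\le T}|\rho|^{-1}$. Combining this with the Riemann--von Mangoldt zero-counting estimate $N(T,\chi)\ll T\log(qT)$ bounds $\sum_{|\gamma|\le T}|\rho|^{-1}\ll(\log qT)^2$, whence the zero sum is $O(\sqrt x(\log qx)^2)$; choosing $T=x$ kills the truncation error and yields $\psi(x,\chi)=O(\sqrt x(\log qx)^2)$. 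Feeding this into the partial-summation identity of the previous paragraph gives the stated estimate.

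I expect the main obstacle to be the control of the sum over zeros. The naive bound $|x^\rho/\rho|=\sqrt x/|\rho|$ summed over all zeros diverges, so the truncated explicit formula and the careful balance of the truncation height $T$ against the resulting error term are essential; it is precisely the placement of the zeros guaranteed by the Generalized Riemann Hypothesis, rather than any partial zero-free region, that makes every term contribute only the square-root saving and thereby produces the square-root main term with logarithmic loss.
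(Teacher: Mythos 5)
Your overall strategy (reduce to a primitive character, truncated explicit formula under GRH, zero-counting, partial summation back to the unweighted prime sum) is the standard route --- the paper itself gives no proof, citing the result as classical from Montgomery and Vaughan --- but there is a genuine quantitative gap in your final accounting, and it matters in exactly the regime where the paper applies the lemma. From your zero-sum bound $\sum_{|\gamma|\le T}|\rho|^{-1}\ll(\log qT)^2$ with $T=x$ you obtain $\psi(x,\chi)\ll\sqrt{x}\,(\log qx)^2$, and your partial-summation identity then yields
\[
\sum_{p\le x}\chi(p)\ \ll\ \frac{\sqrt{x}\,(\log qx)^2}{\log x},
\]
which is $O(\sqrt{x}\log(qx))$ only when $\log q\ll\log x$. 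Your assertion that the factor $1/\log t$ ``converts $\sqrt{x}(\log qx)^2$ into $\sqrt{x}\log(qx)$'' silently assumes $\log(qx)\asymp\log x$, i.e.\ $q\le x^{O(1)}$. But in the proof of Theorem 2 the lemma is invoked with modulus $q\le A<2^{2^{t+T+1}}$ where $2^{t+T}\asymp x^{1/3}$, so $\log q\asymp x^{1/3}\gg\log x$; there your bound degrades to roughly $\sqrt{x}(\log q)^2/\log x\approx x^{7/6}/\log x$, worse than the trivial bound $\pi(x)$, whereas the lemma's uniform bound $\sqrt{x}\log(qx)\approx x^{5/6}$ is precisely what produces the exponent $5/6$.

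The repair is a sharper estimate for the sum over zeros. The unit-interval count $N(t+1,\chi)-N(t,\chi)\ll\log\bigl(q(t+2)\bigr)$ gives
\[
\sum_{0<|\gamma|\le T}\frac{1}{|\rho|}\ \ll\ \sum_{1\le n\le T}\frac{\log(qn)}{n}\ \ll\ (\log T)\log(qT),
\]
one factor $\log T$ and one factor $\log(qT)$, rather than $(\log qT)^2$. With $T=x$ this yields $\psi(x,\chi)\ll\sqrt{x}\,(\log x)\log(qx)$, and now the division by $\log x$ in your partial-summation step legitimately removes the factor $\log x$ uniformly in $q$, leaving $O(\sqrt{x}\log(qx))$ as claimed. (For the truncation error, note first that the statement is trivial when $\log q>\sqrt{x}$, since then $\sqrt{x}\log(qx)>x\ge\pi(x)$; assuming $\log q\le\sqrt{x}$, the term $O\bigl(x(\log qx)^2/T\bigr)$ with $T=x$ is $O\bigl((\log qx)^2\bigr)=O(\sqrt{x}\log(qx))$.) Your reduction to primitive characters and the bound $\psi(x,\chi)-\theta(x,\chi)=O(\sqrt{x})$ are fine as stated.
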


\begin{proof}[Proof of Theorem 2]
    Let $p\leq x$ be an elite prime. We write \[p-1 = 2^{e_p} k_p, \] where $k_p$ is odd and let $f_p$ denote the multiplicative order $2\mod{k_p}$. Now for any $m\geq e_p$ we have that \[(p-1)\mid 2^m (2^{f_p}-1)=2^{m+f_p}-2^m,\] which shows by Fermat's little theorem that \[2^{2^{m+f_p}}\equiv 2^{2^m}\mod{p}.\] This periodicity of the sequence of Fermat numbers shows that if there exists an $m\geq e_p$ such that $F_m$ is a quadratic residue$\mod{p}$ then $p$ cannot be elite since $F_{m+\ell f_p}$ would be a quadratic residue$\mod{p}$ for all $\ell \geq 0$. 
    
    Now suppose $p$ is a prime with $e_p> t$, where $t$ is a parameter depending on $x$ to be chosen later. Then $p$ lies in the residue class $1\mod{2^t}$. As long as $2^t\leq \sqrt{x}$ then we may apply the Brun-Titchmarsh inequality to get an upper bound on the distribution of such primes in arithmetic progressions: \[\pi(x;2^t,1) \leq \frac{2x}{\phi(2^t) (\log x - \log 2^t)} \ll \frac{x}{2^t}. \]  
    
    Now we assume $p$ is an elite prime with $e_p\leq t$. We see now that $p$ must be a quadratic nonresidue modulo $F_{t+i}$ for all $i\geq 0$. Looking at the Legendre symbol we see that for any prime number $p$ \[ 1 - \left( \frac{F_{t+i}}{p}  \right) = \begin{cases}2 & \text{if $F_{t+i}$ is a quadratic nonresidue$\mod{p}$}, \\ 0 & \text{if $F_{t+i}$ is a quadratic residue$\mod{p}$} , \\ 1 & \text{if $p\mid F_{t+i}$}.\end{cases} \] 
    Fixing another parameter $T$ depending on $x$ to be chosen later let \[A=\prod_{i=0}^T F_{t+i}\] so that we now have the following upper bound for $E(x)$: \[E(x) \leq  \frac{1}{2^{T+1}}\sum_{p\leq x }\prod_{i=0}^T \left(1-\left(\frac{F_{t+i}}{p}\right) \right) + \sum_{p|A} 1 + O\left(\frac{x}{2^t}\right).\] Notice that \[A<\prod_{i=0}^{t+T} F_i <2^{2^{T+t+1}},\] and thus \[\sum_{p|A}1 \ll \log A \ll 2^T2^t. \] 
    As for the first term in our estimate for $E(x)$, we have 
    \[\frac{1}{2^{T+1}}  \sum_{j=1}^{T+1} (-1)^j \sum_{\substack{B\subset \{0,1,2,\ldots,T \}\\ |B|=j}} \sum_{p\leq x}\left(\frac{\prod_{b\in B} F_{t+b}}{p} \right) + \frac{\pi(x)}{2^{T+1}}.\] Using the fact that these inner Kronecker symbols are Dirichlet characters to the modulus \[\prod_{b\in B} F_{t+B} \leq A < 2^{2^{t+T+1}}, \] we can apply Lemma 1 once we observe that Fermat numbers are pairwise coprime and hence any product of Fermat numbers will never be a square. We then have the upper bound \[\sum_{p\leq x}\left(\frac{\prod_{b\in B} F_{t+b}}{p} \right) \ll \sqrt{x}(\log x + 2^{t+T}). \] Putting all this together, we have \[E(x) \ll \sqrt{x}\log x + \sqrt{x}2^{t+T} + \frac{x}{2^T} + \frac{x}{2^t}.\]
    Letting $t=T=\frac{\log x}{6\log 2}$ gives the desired result. \end{proof}

    \noindent{\bf Remark.}    Theorem 2 gives an upper bound for the count of elite primes, but if we instead used \[1 + \left( \frac{F_{t+i}}{p}  \right) = \begin{cases}2 & \text{if $F_{t+i}$ is a quadratic residue$\mod{p}$} \\ 0 & \text{if $F_{t+i}$ is a quadratic nonresidue$\mod{p}$}\\ 1 & \text{if $p\mid F_{t+i}$},\end{cases}\] we obtain the same upper bound for the count of anti-elite primes. Furthermore, if we consider the generalized Fermat numbers to the base $b$, \[b^{2^n} + 1,\] we can ask what is special about the case $b=2$. In fact the only time the base $b=2$ is used in the proof of Theorem 2 is that for a fixed prime $p$ the sequence $2^{2^n}+1$ will eventually be periodic$\mod{p}$. But this is true for any base $b$, hence Theorem 2 will also hold for the count of generalized elite and anti-elite primes with respect to the generalized Fermat numbers to the base $b$.
    
    \section*{Acknowledgments}

The author was partially supported by the Research and Training Group grant DMS-1344994 funded by the National Science Foundation. He thanks Paul Pollack and Florian Luca for helpful comments.


\begin{thebibliography}{10}\footnotesize

\bibitem{aigner1986primzahlen} A. Aigner, {\"U}ber {P}rimzahlen, nach denen (fast) alle {F}ermatschen {Z}ahlen quadratische {N}ichtreste sind, {\it Monatsh. Math.} {\bf 101} (1986), 85-93.

\bibitem{allep} A. Chaumont and T. M\"uller. All elite primes up to 250 billions. {\it J. Integer Seq.} {\bf 9} (2006).

\bibitem{continuingsearch} A. Chaumont, J. Leicht, T. M\"uler, and A. Reinhart. The continuing search for large elite primes. {\it Int. J. Number Theory} {\bf 5} (2009).

\bibitem{crandall2003twenty} R. Crandall, E. Mayer, and J. Papadopoulos, The twenty-fourth {F}ermat number is composite, {\it Math. Comp.} {\bf 72} (2003), 1555-1572.

\bibitem{kvrivzek2002convergence} M. K{\v{r}}{\'\i}{\v{z}}ek, F. Luca, and L. Somer, On the convergence of series of reciprocals of primes related to the Fermat numbers, {\it J. Number Theory} {\bf 97} (2002), 95-112.

\bibitem{complex} M. K{\v{r}}{\'\i}{\v{z}}ek, F. Luca, I. Shparlinski, and L. Somer, On the complexity of
testing elite primes, {\it J. Integer Seq.} {\bf 14} (2011), article 1, 5 pp.

\bibitem{montgomery2007multiplicative} H. L. Montgomery and R. C. Vaughan, {\it Multiplicative {N}umber {T}heory I: {C}lassical {T}heory}, Cambridge University Press, Cambridge, 2007.

\bibitem{muller2007anti} T. M\"uller, On anti-elite prime numbers, {\it J. Integer Seq.} {\bf 10} (2007), article 9, 3 pp.

\bibitem{muller2008generalization} T. M\"uller, A generalization of a theorem by {K}{\v{r}}{\'\i}{\v{z}}ek, {L}uca, and {S}omer on elite primes, {\it Analysis (Berlin)} {\bf 28} (2008), 375-382.

\bibitem{muller2008generalized} T. M\"uller, On generalized elite primes, {\it J. Integer Seq.} {\bf 11} (2008), article 3, 15 pp.

\bibitem{ribenboim2012new} P. Ribenboim, {\it The New Book of Prime Number Records}, Springer Science \& Business Media, Berlin, 2012.






\end{thebibliography}
\end{document}